\newcommand{\RomanNumeralCaps}[1]{\MakeUppercase{\romannumeral #1}}
\newtheorem{theorem}{Theorem}[section]
\newtheorem*{theorem*}{Theorem}
\newtheorem{lemma}[theorem]{Lemma}
\newtheorem{proposition}[theorem]{Proposition}
\newtheorem{definition}[theorem]{Definition}
\theoremstyle{definition}
\newtheorem{remark}[theorem]{Remark}
\begin{document}
 
 \providecommand{\Gen}{\mathop{\rm Gen}\nolimits}%
\providecommand{\cone}{\mathop{\rm cone}\nolimits}%
\providecommand{\Sub}{\mathop{\rm Sub}\nolimits}%
\providecommand{\rad}{\mathop{\rm rad}\nolimits}%
\providecommand{\coker}{\mathop{\rm coker}\nolimits}%
\def\A{\mathcal{A}}
\def\C{\mathcal{C}}
\def\D{\mathcal{D}}
\def\P{\mathcal{P}}
\def\X{\mathbb{X}}
\def\Y{\mathbb{Y}}
\def\E{\mathcal{E}}
\def\Z{\mathcal{Z}}
\def\K{\mathcal{K}}
\def\F{\mathcal{F}}
\def\T{\mathcal{T}}
\def\U{\mathcal{U}}
\def\V{\mathcal{V}}
\def\W{\mathsf{W}}
\def\PP{{\mathbb P}}
\providecommand{\add}{\mathop{\rm add}\nolimits}%
\providecommand{\End}{\mathop{\rm End}\nolimits}%
\providecommand{\Ext}{\mathop{\rm Ext}\nolimits}%
\providecommand{\Hom}{\mathop{\rm Hom}\nolimits}%
\providecommand{\ind}{\mathop{\rm ind}\nolimits}%
\newcommand{\module}{\mathop{\rm mod}\nolimits}%

\newcommand{\sbt}{\,\begin{picture}(-1,1)(-1,-3)\circle*{3}\end{picture}\ }

\title[Presentations of Groups with Even Length Relations]{Presentations of Groups with Even Length Relations}

\author{Dr Isobel Webster}
\address{
              University of St Andrews \\
              Tel.: +447720987969\\
             \\
}
\email{iw49@st-andrews.ac.uk}
\date{July 2021}
 \maketitle
\begin{abstract}
We study the properties of groups that have presentations in which the generating set is a fixed set of involutions and all additional relations are of even length. We consider the parabolic subgroups of such a group and show that every element has a factorisation with respect to a given parabolic subgroup. Furthermore, we give a counterexample, using a cluster group presentation, which demonstrates that this factorisation is not necessarily unique.  
\end{abstract}
\thanks{ }

\maketitle
\section{introduction}\label{intro}

A presentation of a group is a concise method of defining a group in terms of generators and relations. In special cases, much information about the corresponding group can be extracted from a given presentation \cite{MKS}. Coxeter presentations are a classical example of this \cite{Bjorner}. 

Recall that a pair $(W,S)$, where $S =  \lbrace s_{1},.., s_{n} \rbrace$ is a non-empty finite set and $W$ is a group, is called a Coxeter system if $W$ has a group presentation with generating set $S$ subject to relations of the form $(s_{i}s_{j})^{m(i, j)}$, for all $s_{i}, s_{j} \in S$, with $m(i, j) = 1$ if $i=j$ and $m(i, j) \geq 2$ otherwise, where no relation occurs on $s_{i}$ and $s_{j}$ if $m(i, j) = \infty$ \cite[Section 5.1]{Humphreys}. Such a group, $W$, is called a Coxeter group. 

An outline of the rich history of research into these groups is given in \cite[Historical Note]{Bourbaki}. In particular, it is well known that the finite Coxeter groups can be classified via their Coxeter graphs and the class of finite Coxeter groups is precisely the class of finite reflection groups \cite[Chapter VI, Section 4, Theorem 1]{Bourbaki}, \cite{Coxeter2}, \cite{Coxeter}. The applications of Coxeter groups are widespread throughout algebra \cite{Bourbaki}, analysis \cite{Wavelets}, applied mathematics \cite{QM} and geometry \cite{FT2}. However, the many combinatorial properties of Coxeter groups make them an interesting topic of research in their own right (see \cite{Bjorner}).


For example, for any subset $I \subseteq S$, $W_{I}$ denotes the subgroup of $W$ generated by $I$. Any subgroup of $W$ which can be obtained in this way is called a parabolic subgroup of $W$ \cite[Section 5.4]{Humphreys}. It is known that the coset $wW_{I}$ contains a unique element of minimal length for each $w \in W$, meaning that we can choose a distinguished coset representative of $wW_{I}$. It follows that there exists a unique factorisation of each element with respect to a given parabolic subgroup \cite[Proposition 2.4.4]{Bjorner}. In certain cases, the reduced expressions for these distinguished coset representatives for a maximal parabolic subgroup can be described explicitly; see for example \cite[Corollary 3.3]{RMThesis}, \cite{Papi}.




In this paper, we will consider group presentations which generalise the Coxeter case by allowing generating sets of infinite size and any relations that have even length. We show that a variation of this property of Coxeter groups holds for any group, $G$, which has a group presentation of this type. 

In particular, we can define a parabolic subgroup of $G$ in an analogous way to the Coxeter case and prove that there exists a (not necessarily unique) factorisation of each element of $G$ with respect to a given parabolic subgroup. We also give a counterexample using cluster group presentations (in the sense of \cite[Definition 1.2]{Webster}) showing that, in contrast to the Coxeter case, this factorisation is not unique in general.  


Some of these results form part of the author’s Ph.D. thesis \cite{Webster2}, carried out at the University of Leeds. The paper will proceed in the following way. In Section $\ref{sec1}$ we establish basic properties of the length function on $G$ and provide a more detailed summary of the main result of this paper. In Section $\ref{properties}$ we prove our first main result: that there exists a factorisation of each element of $G$ with respect to a given parabolic subgroup. In Section $\ref{clustergroups}$, we construct an example in which the factorisation with respect to a given parabolic subgroup is not unique. 

\section{Group presentations with even length relations.}\label{sec1}
Let $G$ be a group arising from a group presentation $\langle X | R \rangle$, where $X$ may be finite or infinite. By the definition of the presentation of a group, any element $w$ of $G$ can be written as $$w = x_{1}^{a_{1}}x_{2}^{a_{2}}...x_{r}^{a_{r}},$$ where $r \in \mathbb{N}$, $x_{j} \in X$ and $a_{j} = \pm 1$, for all $1 \leq j \leq r$. The \textbf{length} of $w \in G$, $l(w)$, is the smallest $r$ such that $w$ has an expression of this form and a \textbf{reduced expression} of $w$ is any expression of $w$ as a product of $l(w)$ elements of $X\cup X^{-1}$ (where  $X^{-1} = \lbrace x^{-1} : x \in X \rbrace$ is a copy of $X$). We refer to $l$ as the \textbf{length function} on $G$. Taking an index set $I$, if $R$ is a set of relations of the form $u_{i} = v_{i}$ for $i \in I$, with $u_{i}, v_{i} \in F(X)$, where $F(X)$ denotes the free group on $X$, then the \textbf{length of the relation $u_{i} = v_{i}$} is given by the length of the word $u_{i}v_{i}^{-1}$ in $F(X)$.

Analogously to the Coxeter case, we define a parabolic subgroup of $G$. 

\begin{definition} 

For $I \subseteq X$, we denote by $G_{I}$ the subgroup of G generated by $I$. A (standard) \textbf{parabolic subgroup} of $G$ is a subgroup of the form $G_{I}$ for some $I \subseteq X$.
\end{definition}

Moreover, for each $I \subseteq X$ we define the following sets. $$G^{I} = \lbrace w \in G : l(wx) > l(w) \hspace{0.15cm} \forall \hspace{0.15cm} x \in I \rbrace;$$
$$^{I}	G = \lbrace w \in G : l(xw) > l(w) \hspace{0.15cm} \forall \hspace{0.15cm} x \in I \rbrace.$$

In Section \ref{properties}, we will prove our main result: 

\begin{proposition}\label{decomposition} Let $G$ be a group generated by a fixed set, $X$, of involutions subject only to relations of even length. For $I \subseteq X$, let $G_{I}$ denote the subgroup of $G$ generated by $I$ and $	G^{I} = \lbrace w \in G : l(wx) > l(w) \hspace{0.25cm} \forall \hspace{0.25cm} x \in I \rbrace$. Then every element $w \in G$ has a factorisation $$w = ab,$$ where $a \in G^{I}, b \in G_{I}$ and $l(w) = l(a) + l(b)$.
\end{proposition}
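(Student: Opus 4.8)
The plan is to argue by induction on $l(w)$, using a single structural consequence of the hypothesis that all relations have even length. Since every defining relation $u_i = v_i$ satisfies $l(u_iv_i^{-1})$ even, the assignment $x \mapsto 1$ for all $x \in X$ extends to a group homomorphism $\sigma \colon G \to \mathbb{Z}/2\mathbb{Z}$. Consequently, if $w = x_1 \cdots x_r$ with each $x_j \in X$ (recall that, as $X$ consists of involutions, no inverses are needed), then $\sigma(w) \equiv r \pmod 2$; in particular all expressions of $w$ share the same length parity and $l(w) \equiv \sigma(w) \pmod 2$. The one fact I will use repeatedly is the immediate corollary that $l(wx) \neq l(w)$ for every $w \in G$ and every $x \in X$, since $\sigma(wx) = \sigma(w) + 1$. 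Combined with the subadditivity $|l(wx) - l(w)| \leq l(x) = 1$, this forces $l(wx) = l(w) \pm 1$. I expect this parity property to already be among the basic length properties of Section~\ref{sec1}, so I would cite it rather than reprove it.

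I would then induct on $n = l(w)$. The base case $w = e$ is immediate: take $a = b = e$, noting $e \in G^I$ (since $l(x) = 1 > 0$ for all $x \in I$) and $e \in G_I$, with $0 = 0 + 0$. For the inductive step, first dispose of the easy case $w \in G^I$: take $a = w$ and $b = e$, so that $l(w) = l(a) + l(b)$ trivially. Otherwise $w \notin G^I$, so there exists $x \in I$ with $l(wx) \leq l(w)$; the parity fact upgrades this to $l(wx) < l(w)$, and subadditivity pins it down to $l(wx) = l(w) - 1$. Writing $w' = wx$, the inductive hypothesis gives a factorisation $w' = ab'$ with $a \in G^I$, $b' \in G_I$ and $l(w') = l(a) + l(b')$.

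Since $x$ is an involution, $w = w'x = a(b'x)$, and setting $b = b'x \in G_I$ yields the desired shape $w = ab$ with $a \in G^I$ (inherited directly from the factorisation of $w'$) and $b \in G_I$. The remaining, and genuinely load-bearing, step is the length equality $l(w) = l(a) + l(b)$, i.e.\ that no cancellation occurs when $b'$ is multiplied by $x$; this is the point where I expect the real work to sit. From $l(w) = l(w') + 1 = l(a) + l(b') + 1$ it suffices to show $l(b) = l(b') + 1$. The inequality $l(b) \leq l(b') + 1$ is subadditivity; for the reverse, suppose instead $l(b) \leq l(b')$, whence the parity fact applied to $b = b'x$ forces $l(b) \leq l(b') - 1$. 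Then $l(w) \leq l(a) + l(b) \leq l(a) + l(b') - 1 = l(w') - 1 = l(w) - 2$, a contradiction. Hence $l(b) = l(b') + 1$ and $l(w) = l(a) + l(b)$, completing the induction. The crux of the argument is thus entirely the parity homomorphism supplied by the even-length relations: it is what prevents length drops of two under right multiplication by a generator and thereby guarantees additivity, exactly where the Coxeter-case proof would instead invoke the exchange condition.
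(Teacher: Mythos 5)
Your proof is correct and follows essentially the same route as the paper: induction on $l(w)$, with the parity homomorphism (the paper's Proposition \ref{5.1} and Lemma \ref{Length}(6)) supplying $l(wx)=l(w)\pm 1$, and the same subadditivity contradiction establishing $l(b'x)=l(b')+1$. The only cosmetic differences are that you start the induction at $w=e$ rather than $l(w)=1$ and omit the (unnecessary) separate treatment of the case $w\in G_I$.
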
 This result is a strengthening of \cite[Proposition 7.2.4]{Webster2}. By comparison, we can see that this result is similar to \cite[Proposition 2.4.4]{Bjorner} for Coxeter groups. However, unlike the Coxeter case, we will show in Section $\ref{clustergroups}$ that the factorisation for elements of $G$ with respect to a given parabolic subgroup is not necessarily unique or determined by minimal length elements of the coset $wG_{I}$. 



\section{Proof of main result.}\label{properties}

To begin, we prove some basic results for the length function on a group with an arbitrary group presentation $\langle X | R \rangle$.  
\begin{lemma}\label{Exchange Lemma 2} If $x \in X$ and $w \in G$ then $l(xw) < l(w)$ if and only if there exists a reduced expression of $w$ beginning in $x^{-1}$. 
\end{lemma}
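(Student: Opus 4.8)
The plan is to prove both implications directly from the definition of the length function. Note first that this lemma is stated for an arbitrary presentation $\langle X \mid R\rangle$, so the even-length hypothesis of Proposition \ref{decomposition} plays no role here; everything follows from the definition of $l$ together with the fact that $xx^{-1} = x^{-1}x = 1$ in $G$ and that $x^{-1} \in X^{-1} \subseteq X \cup X^{-1}$ is a legitimate letter for reduced expressions.

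For the reverse implication, I would suppose that $w$ admits a reduced expression beginning with $x^{-1}$, say $w = x^{-1}y_2^{b_2}\cdots y_r^{b_r}$ with $r = l(w)$ and each $y_j \in X$, $b_j = \pm 1$. Left-multiplying by $x$ cancels the leading letter, giving $xw = y_2^{b_2}\cdots y_r^{b_r}$, an expression for $xw$ as a product of $r-1$ letters. Hence $l(xw) \leq r - 1 = l(w) - 1 < l(w)$, which is the desired inequality.

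For the forward implication, I would suppose $l(xw) < l(w)$ and fix a reduced expression $xw = z_1^{c_1}\cdots z_s^{c_s}$ with $s = l(xw)$. Multiplying on the left by $x^{-1}$ yields $w = x^{-1}z_1^{c_1}\cdots z_s^{c_s}$, an expression for $w$ of length $s+1$ that begins with $x^{-1}$. The only step needing care is the length bookkeeping that certifies this expression is reduced: on one hand it shows $l(w) \leq s+1$, while on the other hand the hypothesis $s = l(xw) < l(w)$ gives $s + 1 \leq l(w)$; combining the two forces $l(w) = s+1$. Thus the exhibited expression has length exactly $l(w)$ and is therefore a reduced expression of $w$ beginning in $x^{-1}$, completing the proof. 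The main (minor) obstacle is precisely this bookkeeping, since one must rule out that the prepended word is merely an expression rather than a genuinely reduced one.
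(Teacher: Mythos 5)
Your proposal is correct and follows essentially the same argument as the paper: in the forward direction, prepend $x^{-1}$ to a reduced expression of $xw$ and use the hypothesis $l(xw)<l(w)$ to certify that the resulting length-$(s+1)$ expression is reduced; in the reverse direction, cancel the leading $x^{-1}$ to drop the length. The length bookkeeping you flag as the delicate step is exactly the observation the paper makes (phrased there as a short contradiction).
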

\begin{proof} Let $l(xw) < l(w)$. Suppose $xw = x_{1}^{a_{1}}x_{2}^{a_{2}}...x_{r}^{a_{r}}$ is a reduced expression, where $x_{j} \in X$ with $a_{j} = \pm 1$ for all $1 \leq j \leq r$. Then $w = x^{-1}x_{1}^{a_{1}}x_{2}^{a_{2}}...x_{r}^{a_{r}}$ is an expression of $w$ of length $r+1$. Moreover, this expression must be reduced otherwise $l(w) \leq r$, contradicting that $l(xw) < l(w)$.


Conversely, if there exists a reduced expression of $w$ beginning in $x^{-1}$, say $w = x^{-1}x_{2}^{a_{2}}...x_{r}^{a_{r}}$ where $x_{j} \in X$ with $a_{j} = \pm 1$ for all $2 \leq j \leq r$, then $xw = x_{2}^{a_{2}}...x_{r}^{a_{r}}$ and so $l(xw) \leq r-1 < l(w)$. 
\end{proof}

For the remaining results, we let $G$ be a group with group presentation $\langle X | R \rangle$ which satisfies the following conditions. 

\begin{itemize}
    \item[(\RomanNumeralCaps{1})] $X$ is a fixed set of involutions. That is, for each $x \in X$, $x^{2} = e$. 
    \item[(\RomanNumeralCaps{2})] Every relation in $R$ has even length. 
\end{itemize}

We establish some properties, analogous to the Coxeter case, of the length function on $G$. The first result is a consequence of Lemma $\ref{Exchange Lemma 2}$.

\begin{lemma}\label{IG} Let $I = X \setminus \lbrace x \rbrace$ for some $x \in X$ and take $w \in G$ such that $w \neq e$. Then $w \in$ $^I G $ if and only if all reduced expressions of $w$ begin in $x$.  
\end{lemma}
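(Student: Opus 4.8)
The plan is to prove both implications directly from Lemma \ref{Exchange Lemma 2}, exploiting the fact that each $y \in X$ is an involution, so that $y^{-1} = y$ and every reduced expression of a nonidentity element can be taken to be a word in $X$ alone (no inverses appear, since $x^{-1}=x$). Under this identification, Lemma \ref{Exchange Lemma 2} reads: for $y \in X$, $l(yw) < l(w)$ if and only if $w$ has a reduced expression beginning in $y$. The one place where condition (II) (all relations have even length) must enter is in upgrading a weak inequality to a strict one, and I would isolate this as a preliminary parity observation.

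For the preliminary step, I would construct the sign homomorphism $\varepsilon \colon G \to \{\pm 1\}$ determined by $\varepsilon(y) = -1$ for every $y \in X$. This is well defined precisely because every relator $u_i v_i^{-1}$ has even length in $F(X)$, so it maps to $+1$; condition (I) causes no obstruction since $\varepsilon(y^{-1}) = \varepsilon(y)$. Evaluating $\varepsilon$ on a reduced expression gives $\varepsilon(w) = (-1)^{l(w)}$, and hence $\varepsilon(yw) = -\varepsilon(w)$ forces $l(yw)$ and $l(w)$ to have opposite parities. In particular $l(yw) \neq l(w)$ for all $y \in X$ and all $w \in G$.

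For the forward implication, I would suppose $w \in {}^{I}G$, so that $l(yw) > l(w)$ for every $y \in I = X \setminus \{x\}$. Since $w \neq e$, any reduced expression of $w$ begins in some $y_0 \in X$, and Lemma \ref{Exchange Lemma 2} then yields $l(y_0 w) < l(w)$. This is incompatible with membership in ${}^{I}G$ unless $y_0 = x$, so every reduced expression must begin in $x$. Conversely, assuming all reduced expressions of $w$ begin in $x$, I would fix any $y \in I$; since $y \neq x$, no reduced expression of $w$ begins in $y$, so the contrapositive of Lemma \ref{Exchange Lemma 2} gives $l(yw) \geq l(w)$, and the parity observation rules out equality, whence $l(yw) > l(w)$. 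As $y \in I$ is arbitrary, this shows $w \in {}^{I}G$.

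The main obstacle is exactly the strictness in the converse: Lemma \ref{Exchange Lemma 2} alone only delivers $l(yw) \geq l(w)$, and without further input one cannot exclude $l(yw) = l(w)$. This is the gap that condition (II) fills through the parity homomorphism, and it is the one point in the argument where the even-length hypothesis is genuinely essential, mirroring the role played by the sign character in the Coxeter setting.
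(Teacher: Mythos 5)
Your proof is correct and follows essentially the same route as the paper: both implications are read off from Lemma~\ref{Exchange Lemma 2}, using that the generators are involutions to identify expressions beginning in $x^{-1}$ with those beginning in $x$. You are in fact more careful than the paper's own proof, which in the converse direction passes from ``$l(yw)\not<l(w)$ for all $y\in I$'' to ``$w\in{}^{I}G$'' without ruling out $l(yw)=l(w)$; your parity argument via the sign homomorphism supplies exactly this missing strictness, a fact the paper only records afterwards (Proposition~\ref{5.1} and Lemma~\ref{Length}\,(6)).
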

\begin{proof} If $w \in$ $ ^I G$ has a reduced expression beginning in $y$ for some $y \in X$ such that $y \neq x$ then $l(yw) < l(w)$, contradicting that $w \in$ $ ^I G$. Conversely, suppose $w \in G$ is such that all reduced expressions of $w$ begin in $x$. For any $y \in X$ such that $l(yw) < l(w)$ there exists a reduced expression of $w$ beginning in $y$, by Lemma $\ref{Exchange Lemma 2}$. Hence $y=x$ and so $w \in$  $^I G$. 


\end{proof} 

\begin{remark} It follows from Lemma \ref{IG} that $w \in$ $G^{I} $ if and only if all reduced expressions of $w$ end in $x$.

\end{remark}

The following result is analogous to \cite[Proposition  5.1]{Humphreys} for Coxeter groups.

\begin{proposition}\label{5.1} There exists a surjective homomorphism $\varepsilon: G \longrightarrow \lbrace \pm 1 \rbrace$ defined by $\varepsilon: x \longmapsto -1$ for each $x \in X$. It follows that the order of each generator is 2.
\end{proposition}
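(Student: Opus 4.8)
The plan is to build $\varepsilon$ using the universal property of the presentation $\langle X \mid R \rangle$ (von Dyck's theorem). First I would define the set map $X \to \lbrace \pm 1 \rbrace$ sending every generator to $-1$ and extend it, via the universal property of the free group, to a homomorphism $\tilde\varepsilon \colon F(X) \to \lbrace \pm 1 \rbrace$. The whole task then reduces to checking that $\tilde\varepsilon$ sends every relator to $1$, so that it factors through the normal closure of $R$ and descends to a homomorphism on $G$.

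The computation underlying this is simple: for any word $x_{1}^{a_{1}} \cdots x_{r}^{a_{r}}$ in $F(X)$ with each $a_{j} = \pm 1$, we have $\tilde\varepsilon(x_{1}^{a_{1}} \cdots x_{r}^{a_{r}}) = (-1)^{a_{1}} \cdots (-1)^{a_{r}} = (-1)^{r}$, because $(-1)^{-1} = -1$ in $\lbrace \pm 1 \rbrace$. Thus the image of a word depends only on the parity of its length, a quantity that is unaffected by free reduction (which removes letters in pairs). Now a relation $u_{i} = v_{i}$ in $R$ has, by condition (\RomanNumeralCaps{2}), even relation length, meaning $u_{i} v_{i}^{-1}$ has even length in $F(X)$; hence $\tilde\varepsilon(u_{i} v_{i}^{-1}) = 1$ and so $\tilde\varepsilon(u_{i}) = \tilde\varepsilon(v_{i})$. (The involution relations of condition (\RomanNumeralCaps{1}) are of the form $x^{2} = e$, of length $2$, so they are covered as well.) Since every relator lies in the kernel of $\tilde\varepsilon$, the map descends to the required homomorphism $\varepsilon \colon G \to \lbrace \pm 1 \rbrace$ with $\varepsilon(x) = -1$ for all $x \in X$.

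Surjectivity is then immediate, since any $x \in X$ satisfies $\varepsilon(x) = -1$ while $\varepsilon(e) = 1$, so the image is all of $\lbrace \pm 1 \rbrace$. For the concluding assertion, condition (\RomanNumeralCaps{1}) gives $x^{2} = e$, so the order of each generator divides $2$; and $x \neq e$ in $G$ because $\varepsilon(x) = -1 \neq 1 = \varepsilon(e)$. Therefore each generator has order exactly $2$.

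The one genuinely load-bearing step is the verification that $\tilde\varepsilon$ annihilates every relator, and this is precisely where the even-length hypothesis (\RomanNumeralCaps{2}) enters: were odd-length relations permitted, a relator could map to $-1$ and the map would fail to descend. The existence of the homomorphism on the free group, its surjectivity, and the order computation are all routine once that well-definedness check is in place.
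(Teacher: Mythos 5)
Your proof is correct and is exactly the standard argument the paper implicitly relies on: the paper states this proposition without proof, citing only the analogy with \cite[Proposition 5.1]{Humphreys}, whose proof is the same von Dyck / universal-property argument you give, with the parity-of-length observation doing the work of showing each relator maps to $1$. Your handling of the one subtle point --- that the paper's notion of relation length is the reduced length of $u_iv_i^{-1}$ in $F(X)$, and that parity is preserved under free reduction --- is accurate, and the order-$2$ conclusion is argued correctly.
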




\begin{remark} Suppose the group presentation $\langle X | R \rangle$ satisfies only condition $(\RomanNumeralCaps{2})$ and each generator has finite order. In this case, the surjective homomorphism $\varepsilon$ exists and the order of each generator $x \in X$ of $G$ is even.
\end{remark}

In \cite[Section 5.2]{Humphreys}, the length function on a Coxeter group is defined along with five basic properties. Below, we consider the length function on $G$.

As for the Coxeter case, each element of the generating set $X$ of $G$ is an involution and so any element $w$ of $G$ can be written in the form $ w = x_{1}x_{2}...x_{r}$ for $x_{j} \in X$. So the length, $l(w)$, of $w \in G$, will be the smallest $r$ such that $w = x_{1}x_{2}...x_{r}$. 

\begin{lemma}\label{Length} For all $w_{1}, w_{2} \in G$ and $x \in X$ the following properties hold.
\begin{itemize}
\item[(1)] $l(w_{1}) = l(w_{1}^{-1})$.
\item[(2)] $l(w_{1}) = 1 \text{ if and only if }  w_{1} \in X$.
\item[(3)] $l(w_{1}w_{2}) \leq l(w_{1}) + l(w_{2})$.
\item[(4)] $l(w_{1}w_{2}) \geq l(w_{1}) - l(w_{2})$.
\item[(5)] $l(w_{1}) - 1 \leq l(w_{1}x) \leq l(w_{1}) + 1$.
\item[(6)] $l(w_{1}x) = l(w_{1}) \pm 1$ and $l(xw_{1}) \neq l(w_{1})$
\end{itemize}
\end{lemma}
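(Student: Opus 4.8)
The plan is to prove the six properties essentially in the order listed, leaning throughout on two features of the setup: every generator is an involution (so $x^{-1} = x$ and every element is a product of generators with no inverses appearing), and the sign homomorphism $\varepsilon$ furnished by Proposition \ref{5.1}.

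For (1) I would simply reverse a reduced expression: if $w_{1} = x_{1}\cdots x_{r}$ is reduced then $w_{1}^{-1} = x_{r}\cdots x_{1}$ (using $x_{j}^{-1} = x_{j}$) is an expression of length $r$, so $l(w_{1}^{-1}) \leq l(w_{1})$, and the reverse inequality follows by symmetry. Property (2) follows by observing that $w_{1} \in X$ has length at most $1$, while $\varepsilon(w_{1}) = -1 \neq \varepsilon(e)$ forces $w_{1} \neq e$ and hence $l(w_{1}) = 1$; the converse is immediate, since an element of length $1$ has a one-letter reduced expression. Property (3) is proved by concatenating reduced expressions for $w_{1}$ and $w_{2}$. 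Property (4) then drops out of (3) and (1) by writing $w_{1} = (w_{1}w_{2})w_{2}^{-1}$ and rearranging. Property (5) is just the special case $w_{2} = x$ of (3) and (4), together with $l(x) = 1$ from (2).

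The substantive step is (6). By (5) we already know $l(w_{1}x) \in \{\, l(w_{1}) - 1,\ l(w_{1}),\ l(w_{1}) + 1 \,\}$, so the whole content is to exclude the middle value. Here I would run a parity argument built from $\varepsilon$: since $\varepsilon$ sends every generator to $-1$, any expression $w = x_{1}\cdots x_{s}$ gives $\varepsilon(w) = (-1)^{s}$, so all expressions of a fixed element $w$ have lengths of the same parity, namely the parity determined by $\varepsilon(w)$; in particular $\varepsilon(w) = (-1)^{l(w)}$. Applying this to $w_{1}x$ yields $\varepsilon(w_{1}x) = \varepsilon(w_{1})\varepsilon(x) = -\varepsilon(w_{1})$, so $l(w_{1}x)$ and $l(w_{1})$ have opposite parity and cannot coincide; combined with (5) this gives $l(w_{1}x) = l(w_{1}) \pm 1$. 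The identical computation with $\varepsilon(xw_{1}) = -\varepsilon(w_{1})$ yields $l(xw_{1}) \neq l(w_{1})$.

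The main obstacle is making sure the parity argument is airtight, which amounts to knowing that $\varepsilon$ is genuinely well-defined on $G$ — and this is precisely where condition (\RomanNumeralCaps{2}) enters, already packaged inside Proposition \ref{5.1}, whose construction depends on every defining relation having even length. Once $\varepsilon$ is available, (6) is the only property that truly uses the even-length hypothesis; properties (1)--(5) hold for any group generated by involutions.
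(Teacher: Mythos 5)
Your proof is correct and is precisely the standard Coxeter-style argument that the paper defers to by citing \cite[Section 5.2]{Humphreys}: reversing reduced words for (1), concatenation and subadditivity for (3)--(5), and the parity of the sign homomorphism $\varepsilon$ for (6). One small caveat: your closing claim that (1)--(5) hold for any group generated by involutions overstates slightly, since your own (correct) proof of (2) invokes $\varepsilon$, and the paper's Remark \ref{generalgroups2} notes that (2) fails for the presentation $\langle x \mid x = e \rangle$, whose generator also satisfies $x^{2}=e$.
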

\begin{proof} The proof is analogous to the Coxeter case, see \cite[Section 5.2]{Humphreys}.
\end{proof} 
\begin{remark}\label{generalgroups2} The properties $(1)$, $(3)$ and $(4)$ in Lemma $\ref{Length}$ hold for a group with arbitrary presentation.

It is easy to see that $(2)$ and $(6)$ fail for the trivial group given by the presentation $\langle x \vert x = e \rangle$. However, the property
\begin{center}

\begin{itemize}
    \item[($2^{\ast})$] $w_{1} \in X\cup X^{-1} $ implies $ l(w_{1}) \leq 1$
\end{itemize} 

\end{center} holds for any group with group presentation $\langle X \vert R \rangle$. We can then apply $(2^{\ast})$, $(3)$ and $(4)$ to prove that $(5)$ also holds for a group with arbitrary presentation. 
\end{remark}
We are now ready to prove Proposition $\ref{decomposition}$. 

\begin{proof}[Proof of Proposition $\ref{decomposition}$] We proceed by induction on $l(w)$. If $l(w) = 1$ then $w = x$, for some $x \in X$. If $x \in I$, then we choose $a = e, b = x$ and, by Lemma $\ref{Length}\hspace{1mm}(2)$, this is the desired factorisation. If $x \notin I$, then we claim that $l(xy) > l(x)$ for all $y \in I$ and so we choose $a = x, b = e$. Taking any $y \in I$, if $l(xy) < l(x)$ then $l(xy) = 0$ as $l(x) = 1$, by Lemma $\ref{Length}\hspace{1mm}(2)$. Thus $xy = e$. It follows that $x = y \in I$, contradicting that $x \notin I$. As $l(xy) \neq l(x)$ by Lemma $\ref{Length}\hspace{1mm}(6)$, it must be that $l(xy) > l(x)$. 
 
Suppose $l(w) = r \geq 1$ and that the statement holds for every element of $G$ of shorter length. If $w \in G^{I}$ then we choose $b=e$ and $a =w$. Similarly, if $w \in G_{I}$ then we choose $a=e$ and $b=w$. So we need only consider the case when $w \notin G^{I}$ and $w \notin G_{I}$. 

As $w \notin G^{I}$, there exists $x \in I$ such that $l(wx) < l(w)$. By Lemma $\ref{Length}\hspace{1mm}(5)$, $l(wx) = l(w)-1 < r$. 
By induction, there exists $a' \in G^{I}$ and $b' \in G_{I}$ such that $wx = a'b'$ and $$l(wx) = l(w)-1 = l(a') + l(b').$$ Let $a=a'$ and $ b = b'x$. Then $ab = a'b'x = (wx)x = wx^{2} = w.$ It remains to show that $l(b'x)=l(b')+1$, giving $l(a) + l(b) = l(a') + l(b'x) = l(a')+l(b') +1 = l(wx) + 1 = l(w)$.

We assume, for a contradiction, that $l(b'x) < l(b')$. That is, by Lemma $\ref{Length}\hspace{1mm}(5)$, $l(b'x) = l(b')-1$. By the above, $wx = a'b'$, so $w = a'b'x$, and $l(wx) = l(a') + l(b')$. Thus \begin{align*}
l(w) = l(a'b'x) &\leq l(a')+l(b'x) \\
&= l(a') + (l(b')-1)\\
&=(l(a')+l(b'))-1 \\
&=l(wx)-1 \\
&<l(wx),
\end{align*} contradicting the fact that $l(wx) < l(w)$.  Since $l(b'x) \neq  l(b')$ by Lemma $\ref{Length}\hspace{1mm}(6)$, we have $l(b'x) > l(b')$ and so $l(b'x)=l(b')+1$ by Lemma $\ref{Length}\hspace{1mm}(5)$. Therefore, $l(w) = l(a)+l(b)$. Finally, we note that $a=a' \in G^{I}$ and, as $x, b' \in G_{I}$, we have that $b \in G_{I}$. Thus we have obtained the required factorisation of $w$. 

\end{proof}

\begin{remark} By applying Proposition $\ref{decomposition}$ to $w^{-1}$, it can be shown that, for any $I \subseteq X$, every element $w \in G$ has a factorisation $w = ab$ for some $ a \in G_{I}, b \in $ $^{I}G$ such that $l(w) = l(a) + l(b)$. 
\end{remark}

\begin{remark} Proposition $\ref{decomposition}$ does not hold in general for groups with an arbitrary group presentation. A counterexample is given by the the Klein four-group, $\mathcal{V} = \lbrace e, i, j, k \rbrace$ \cite[Section 44.5]{K4}, which has group presentation:

$$\mathcal{V} =  \langle i, j, k \vert i^{2} = j^{2} = k^{2} = ijk = e \rangle.$$

Taking $w = j$, this is a unique reduced expression of $w$. We have $\mathcal{V}_{\lbrace i \rbrace} = \lbrace e, i \rbrace$ and so $j \notin \mathcal{V}_{\lbrace i \rbrace}$. However, $ji = k$ and so $l(ji) = l(j)$, meaning $w \notin \mathcal{V}^{\lbrace i \rbrace}$. Thus $j$ has no reduced factorisation with respect to $\mathcal{V}_{\lbrace i \rbrace}$. 

\end{remark}

As stated in \cite[Proposition 2.4.4]{Bjorner}, in the Coxeter case this factorisation exists and is furthermore unique. The element in the factorisation lying in the set $W^{I}$ can be shown to be the unique element of $wW_{I}$ of minimal length \cite[Proposition 1.10]{Humphreys}. The uniqueness of these minimal length coset elements distinguish them as coset representatives and they are referred to as the \textit{minimal coset representatives} \cite[Section 1.10]{Humphreys}. Thus the set $wW_{I}\cap W^{I}$ contains only one element, namely the minimal coset representative of $wW_{I}$. The uniqueness of the factorisation for elements of Coxeter groups is a consequence of the Deletion Condition and is not a property that is transferable to the factorisations of elements in $G$ with respect to given parabolic subgroup, $G_{I}$. A counterexample proving this will be given in the next section.

However, in the cases when $I = X$ and $\vert I \vert = 1$ the factorisation of all elements of the group with respect to the corresponding parabolic subgroup will be unique.

\begin{lemma}\label{unique case} If $I = X$ or $I = \lbrace x \rbrace$ for some $x \in X$, then for all $w \in G$, $wG_{I}\cap G^{I}$ contains a unique element. Hence the factorisation of each element of $G$ with respect to $G_{I}$ is unique. 
\end{lemma}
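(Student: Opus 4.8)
The plan is to prove the two cases $I = X$ and $I = \{x\}$ separately, since each gives a uniqueness argument of a different flavour. In both cases, Proposition \ref{decomposition} already supplies existence of a factorisation $w = ab$ with $a \in G^I$, $b \in G_I$, and $l(w) = l(a) + l(b)$. What remains is to show the intersection $wG_I \cap G^I$ is a singleton, which is what pins down uniqueness: if $w = ab = a'b'$ are two such factorisations, then $a, a' \in wG_I$ (since $a = w b^{-1}$ and $a' = w b'^{-1}$ with $b, b' \in G_I$), and both $a, a' \in G^I$, so establishing $\lvert wG_I \cap G^I \rvert = 1$ forces $a = a'$ and hence $b = a^{-1}w = a'^{-1}w = b'$.

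For the case $I = X$, I would argue that $G^X$ can only contain the identity. By the Remark following Lemma \ref{IG}, an element lies in $G^X$ precisely when all its reduced expressions end in some $x \in X$ with $l(wx) > l(w)$ for \emph{every} $x \in X$; but any nonidentity $w$ has a reduced expression $x_1 \cdots x_r$ ending in $x_r$, and $l(w x_r) = l(w) - 1 < l(w)$ since $x_r$ is an involution, so $w \notin G^X$. Hence $G^X = \{e\}$. Since $G_X = G$, every coset $wG_X = G$ meets $G^X = \{e\}$ in exactly the point $e$, giving $\lvert wG_X \cap G^X\rvert = 1$ immediately. The corresponding factorisation is simply $w = e \cdot w$ with $b = w \in G_X = G$.

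For the case $I = \{x\}$, the key structural fact is that $G_{\{x\}} = \{e, x\}$ has order $2$ (by Proposition \ref{5.1}, each generator has order exactly $2$), so the coset $wG_{\{x\}} = \{w, wx\}$ has at most two elements. I would then show that at most one of $w$ and $wx$ can lie in $G^{\{x\}}$. By the Remark after Lemma \ref{IG}, membership of $G^{\{x\}}$ for a nonidentity element means all reduced expressions end in $x$, equivalently (via Lemma \ref{Exchange Lemma 2} applied on the right, or directly) $l(wx) > l(w)$. By Lemma \ref{Length}(6), $l(wx) \neq l(w)$, so exactly one of $l(wx) = l(w) + 1$ and $l(wx) = l(w) - 1$ holds. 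If $l(wx) = l(w)+1$ then $w \in G^{\{x\}}$ and, writing $w' = wx$, we have $l(w'x) = l(wx\cdot x) = l(w) = l(w') - 1 < l(w')$, so $w' = wx \notin G^{\{x\}}$; symmetrically, if $l(wx) = l(w) - 1$ then $wx \in G^{\{x\}}$ but $w \notin G^{\{x\}}$. Either way exactly one of the two coset members lies in $G^{\{x\}}$, so $\lvert wG_{\{x\}} \cap G^{\{x\}}\rvert = 1$.

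I expect the main obstacle to be the careful treatment of edge cases, chiefly the identity element $w = e$: here $wG_{\{x\}} = \{e, x\}$, and one must check directly that $e \in G^{\{x\}}$ (vacuously, since $l(ex) = l(x) = 1 > 0 = l(e)$) while $x \notin G^{\{x\}}$ (as $l(x\cdot x) = l(e) = 0 < l(x)$), so the singleton count survives. The Remark after Lemma \ref{IG} is stated for $w \neq e$, so I would not lean on it for the identity and would instead verify the length inequalities by hand. Once the intersection is shown to be a singleton in each case, the deduction that the factorisation is unique is the short bridging argument given above, and I would spell it out explicitly to complete the proof.
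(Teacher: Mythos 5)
Your proposal is correct and follows essentially the same route as the paper's own proof: identify $G^{X}=\lbrace e\rbrace$ in the first case, use $G_{\lbrace x\rbrace}=\lbrace e,x\rbrace$ together with Lemma \ref{Length}(6) to see that exactly one of $w$, $wx$ lies in $G^{\lbrace x\rbrace}$ in the second, and then deduce uniqueness of the factorisation from the singleton intersection. You merely supply a little more detail than the paper (justifying $G^{X}=\lbrace e\rbrace$ and checking the identity element by hand), which is harmless.
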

\begin{proof} If $I = X$ then $G_{I} = G$ and $G^{I} = \lbrace e \rbrace$, thus $wG_{I}\cap G^{I} = \lbrace e \rbrace$. 

If $I = \lbrace x \rbrace$ for some $x \in X$, then $G_{I} = \lbrace e, x \rbrace$, as $X$ is a fixed set of involutions generating $G$, and $wG_{I} = \lbrace w, wx \rbrace$ for any $w \in G$. By Lemma $\ref{Length} (6)$, $l(wx) = l(w) \pm 1$ meaning that exactly one of $w$ or $wx$ is an element of $G^{I}$ and thus the unique element in $ wG_{I}\cap G^{I}$. 

We conclude that, in both cases, the factorisation of any $w \in G$ with respect to $G_{I}$ is unique as if $w = ab = a'b'$ where $a, a' \in G^{I}$ and $b, b' \in G_{I}$, then $a, a' \in wG_{I}\cap G^{I}$. Thus $a = a'$ and consequently $b = b'$. 
\end{proof}

\section{Non-uniqueness of factorisations.}\label{clustergroups}

In this section, we present a counterexample demonstrating that the factorisations, shown to exist by Proposition \ref{decomposition}, for elements of a group with a presentation whose generators are involutions and whose relations are of even length are not necessarily unique.

Specifically, we consider the finite group, $G$, arising from the group presentation $\langle t_{1}, t_{2}, t_{3} |  R \rangle$, where $R$ is the following set of relations:

\begin{itemize}
\item[$(a)$] $t_{1}^{2} = t_{2}^{2} = t_{3}^{2} = e$ \textit{(each generator is an involution)}.
\item[$(b)$] $t_{1}t_{2}t_{1} = t_{2}t_{1}t_{2}, \hspace{0.3cm}  t_{1}t_{3}t_{1} = t_{3}t_{1}t_{3},  \hspace{0.3cm}  t_{2}t_{3}t_{2} = t_{3}t_{2}t_{3}$ \textit{(the braid relations)}. 
\item[$(c)$]  $t_{1}t_{2}t_{3}t_{1} = t_{2}t_{3}t_{1}t_{2} = t_{3}t_{1}t_{2}t_{3}$ \textit{(the cycle relation)}.
\end{itemize} 

We note that the above group presentation of $G$ satisfies conditions $(\RomanNumeralCaps{1})$ and $(\RomanNumeralCaps{2})$, thus Proposition \ref{decomposition} holds for elements of $G$ with respect to a given parabolic subgroup. 

\begin{remark} This group presentation is a \textit{cluster group presentation}. Cluster groups are groups defined by presentations arising from cluster algebras. It was first shown in  \cite{BM} and then in \cite{GM} that a group presentation could be associated to a quiver appearing in a seed of a cluster algebra of finite type and that the corresponding group is invariant under mutation of the quiver \cite[Lemma 2.5]{GM}, \cite[Theorem 5.4]{BM}. For detailed definitions of cluster algebras and quiver mutation, see \cite[Chapter 2]{Marsh}. It is the group presentation based on the work done in \cite{GM}, that was considered more generally in \cite{Webster} where, due to the context, the corresponding groups were labelled \textit{cluster groups}. Each quiver appearing in a cluster algebra of finite type is mutation-equivalent to an oriented Dynkin diagram \cite[Theorem 1.4]{FZ} and the corresponding cluster group presentation is precisely a Coxeter presentation. Consequently, a cluster group associated to a mutation-Dynkin quiver is isomorphic to the finite reflection group of the same Dynkin type. The presentation defined above is the cluster group presentation associated to the quiver of mutation-Dynkin type $A_{3}$ in Figure \ref{fig: quiver}. 
\begin{figure}[!htb]
\center{
\begin{tikzpicture}
\node (v0) at (0:0)[text width=0.5cm] {$1$};
\node (v1) at (0:2)[text width=0.5cm] {$2$};
\node (v6) at (30:2) {};
\node (v4) at (0:4)[text width=0.5cm] {$3$};

\draw[<-] (v0) -- (v1);
\draw[->] (v4) -- (v1);
\draw[<-] (v4) .. controls (v6) .. (v0);
\end{tikzpicture}
\caption{\label{fig: quiver} $G$ is the cluster group corresponding \\ to the above quiver of mutation-Dynkin type $A_{3}$.}}
\end{figure}
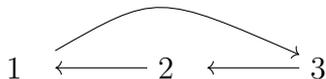
\end{remark}

It was shown in \cite[Lemma 3.10]{Webster} that an isomorphism between a cluster group associated to a mutation-Dynkin quiver of type $A_{n}$ and the symmetric group on $n+1$ elements, denoted by $\Sigma_{n+1}$, can be constructed explicitly from the quiver. It follows from \cite[Lemma 3.10]{Webster} that the following map defines an isomorphism between $G$ and $\Sigma_{4}$. 

\begin{lemma}\label{braidgraphiso1} \cite[Proposition 3.4]{Ser}, \cite[Lemma 3.10]{Webster} There exists an isomorphism $\pi: G \longrightarrow \Sigma_{4}$ given by 
\begin{align*}
\pi: &t_{1} \longmapsto (1, 2), \\
\pi: &t_{2} \longmapsto (2, 3), \\
\pi: &t_{3} \longmapsto (2, 4).
\end{align*}
\end{lemma}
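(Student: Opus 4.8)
The plan is to show that $\pi$ is a well-defined surjective homomorphism and then to force injectivity by a counting argument, exploiting the finiteness $|\Sigma_{4}| = 24$. Surjectivity already yields $|G| \geq 24$, so the entire substance of the argument reduces to the a priori bound $|G| \leq 24$; once this is established, $\pi$ is a bijection and hence an isomorphism. The obstacle will therefore be the upper bound, not the verification of the map.

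First I would check that the assignment $t_{1}\mapsto(1,2)$, $t_{2}\mapsto(2,3)$, $t_{3}\mapsto(2,4)$ respects every relation in $R$, which is exactly what is needed for $\pi$ to be well defined out of the presentation $\langle t_{1},t_{2},t_{3}\mid R\rangle$. Relations $(a)$ are immediate, since each image is a transposition and hence an involution. Each braid relation in $(b)$ asserts that two transpositions sharing a point conjugate to the same transposition: for example $(1,2)(2,3)(1,2)$ and $(2,3)(1,2)(2,3)$ both equal $(1,3)$, and similarly the remaining two pairs yield $(1,4)$ and $(3,4)$ respectively. For the cycle relation $(c)$ I would evaluate the three length-four words directly and observe that $t_{1}t_{2}t_{3}t_{1}$, $t_{2}t_{3}t_{1}t_{2}$ and $t_{3}t_{1}t_{2}t_{3}$ all map to the same $3$-cycle. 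Surjectivity is then clear: the transpositions $(1,2),(2,3),(2,4)$ have \emph{transposition graph} with edges $\{1,2\},\{2,3\},\{2,4\}$, a spanning tree on $\{1,2,3,4\}$, and a set of transpositions generates the full symmetric group precisely when this graph is connected and spanning.

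The remaining, genuinely substantive, step is the bound $|G|\leq 24$. Here I would use the parabolic machinery already in place: set $I=\{t_{1},t_{2}\}$. Since $t_{1},t_{2}$ are involutions subject (among themselves) only to the braid relation $t_{1}t_{2}t_{1}=t_{2}t_{1}t_{2}$, the subgroup $G_{I}$ is a quotient of $\Sigma_{3}$, so $|G_{I}|\leq 6$; as $\pi$ carries $G_{I}$ onto $\langle(1,2),(2,3)\rangle\cong\Sigma_{3}$, we get $|G_{I}|=6$. It then suffices to prove that $G_{I}$ has at most four cosets in $G$, since $|G|=[G:G_{I}]\,|G_{I}|\leq 4\cdot 6 = 24$. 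I would establish this by a Todd--Coxeter style coset enumeration: starting from the coset $G_{I}$ and repeatedly right-multiplying by $t_{1},t_{2},t_{3}$, one checks using the relations that the process closes up on four cosets (morally, the four values to which a representative can send a distinguished point, mirroring the index of $\Sigma_{3}$ in $\Sigma_{4}$).

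The main obstacle is precisely the termination of this enumeration, and it is exactly where relation $(c)$ is indispensable: the presentation defined by $(a)$ and $(b)$ alone is the Coxeter group whose diagram is a triangle with all edges labelled $3$, namely the infinite affine group $\tilde{A}_{2}$, whose cosets of $G_{I}$ do \emph{not} close up. Concretely, the cycle relation is what permits one to rewrite the words produced by right-multiplication back into previously recorded cosets, guaranteeing that the enumeration stabilises at four. Combining this upper bound with surjectivity gives $|G|=24$, whence $\ker\pi=\{e\}$ and $\pi$ is the desired isomorphism. Alternatively, one may simply invoke the standard fact, recorded in \cite[Proposition 3.4]{Ser}, that this braid-plus-cycle presentation defines $\Sigma_{4}$, which bypasses the explicit enumeration entirely.
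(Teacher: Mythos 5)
Your argument is correct in substance, but it takes a genuinely different route from the paper, which offers no proof at all of this lemma: the statement is simply imported from \cite[Proposition 3.4]{Ser} and \cite[Lemma 3.10]{Webster}, where the isomorphism is read off from the planar/quiver combinatorics (the braid-graph, respectively mutation-Dynkin, machinery). What you propose instead is a self-contained elementary verification: check that the images $(1,2),(2,3),(2,4)$ satisfy relations $(a)$--$(c)$ so that $\pi$ is well defined, note that their transposition graph is a spanning tree so $\pi$ is onto, and then close the gap with the bound $|G|\leq 24$ obtained from $|G_{I}|=6$ for $I=\{t_{1},t_{2}\}$ together with a coset enumeration showing $[G:G_{I}]\leq 4$. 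All of these steps are sound, and your observation that relation $(c)$ is exactly what prevents $G$ from being the infinite affine Coxeter group $\tilde{A}_{2}$ is a genuine insight that the paper's citation hides. The one place where your write-up stops short of a complete proof is the coset enumeration itself: you assert that the process ``closes up on four cosets'' but do not exhibit the coset table, and this is precisely the step that carries all the content (for instance, one must verify identities such as $Ht_{3}t_{1}t_{2}=Ht_{3}t_{1}$, which follows from $t_{3}t_{1}t_{2}=t_{1}t_{2}t_{3}t_{1}t_{3}$ via the cycle relation and then the braid relation $t_{1}t_{3}t_{1}=t_{3}t_{1}t_{3}$). Since the table is small and finite this is routine to complete, so I would regard your approach as a valid, more elementary alternative to the paper's citation; its cost is the bookkeeping of the enumeration, and its benefit is that the lemma no longer depends on external results.
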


From Lemma $\ref{braidgraphiso1}$, we conclude that $G$ contains 24 distinct elements. Moreover, we can use this isomorphism to determine that $t_{i}t_{j}t_{i}t_{k} = t_{k}t_{i}t_{j}t_{i}$ for all combinations of pairwise distinct $i, j, k$. Using this together with the group relations, we construct the Cayley graph of $G$ with respect to this presentation, which is displayed in Figure \ref{fig: Cayley graph}.

Our goal is to choose a subset, $I$, of the generating set of $G$ such that we can find an element of $G$ with two distinct factorisations with respect to the corresponding parabolic subgroup, thus proving the factorisations shown to exist in Proposition \ref{decomposition} are not necessarily unique. Due to Lemma \ref{unique case}, $I$ must contain two distinct elements. 

Take $I = \lbrace t_{1}, t_{2} \rbrace$. From the Cayley graph of $G$ (Figure \ref{fig: Cayley graph}) we have \begin{center} $G^{I}  = \lbrace e, \hspace{0.2cm} t_{3},\hspace{0.2cm} t_{1}t_{3},\hspace{0.2cm} t_{2}t_{3}, \hspace{0.2cm}t_{1}t_{2}t_{3},\hspace{0.2cm} t_{2}t_{1}t_{3} \rbrace. $
\end{center}

For $ w = t_{2}t_{3}t_{1}t_{2}$, we have $a = t_{2}t_{3} \in G^{I} $ and $ b = t_{1}t_{2} \in G_{I}$. Using the Cayley graph of $G$ in Figure \ref{fig: Cayley graph}, we observe that each of these are all reduced expressions and so $a$ and $b$ yield a factorisation of $w$ as given in Proposition \ref{decomposition}. Moreover, we can apply the cycle relation to $w$ to obtain another reduced expression of $w$, $w = t_{1}t_{2}t_{3}t_{1}$. Let $a' = t_{1}t_{2}t_{3} \in G^{I} $ and $b'=t_{1} \in G_{I}$. As this expression for $a'$ is a subexpression of a reduced expression of $w$, it must be reduced. From Lemma \ref{braidgraphiso1} we conclude that $t_{i} \neq e$ for $i = 1, 2, 3$, thus $t_{1}$ is also reduced. Clearly, as $a$, $a'$ and $b$, $b'$ are of different lengths, these are distinct pairs of elements in $G^{I}$ and $G_{I}$, respectively. Thus, we obtain distinct factorisations $w = ab = a'b'$ where $l(w) = l(a) + l(b) = l(a') + l(b')$ for $a, a' \in G^{I}$ and $b, b' \in G_{I}$. 

This counterexample demonstrates that for a group $G$ with presentation $\langle X \vert R \rangle $ satisfying conditions $(\RomanNumeralCaps{1})$ and $(\RomanNumeralCaps{2})$, unlike in the Coxeter case, it is possible for the set $wG_{I}\cap G^{I}$ to have more than one element for some $I \subseteq X$ and $w \in G$. In the counterexample above, two distinct elements in this set are given by $a = t_{2}t_{3}$ and $a'= t_{1}t_{2}t_{3}$ for $ w = t_{2}t_{3}t_{1}t_{2}$. 

Furthermore, for a Coxeter group, $W$, the unique factorisation of an element $w \in W$ with respect to a parabolic subgroup, $W_{I}$, is determined by the unique minimal length element of the coset $wW_{I}$ \cite[Corollary 2.4.5]{Bjorner}. In the more general case, the same example used to demonstrate that the factorisation is not necessarily unique can also be used to show that minimal length elements of $wG_{I}$ do not necessarily yield factorisations of $w$.  

Indeed, from the Cayley graph in Figure \ref{fig: Cayley graph}, we have 
\begin{center} $G_{I}  = \lbrace e, \hspace{0.2cm} t_{1},\hspace{0.2cm} t_{2},\hspace{0.2cm} t_{1}t_{2}, \hspace{0.2cm}t_{2}t_{1},\hspace{0.2cm} t_{1}t_{2}t_{1} = t_{2}t_{1}t_{2} \rbrace. $
\end{center} 
Choosing $w = t_{1}t_{2}t_{3} \in G^{I}$, we observe from the Cayley graph that this is a unique reduced expression of $w$ and so the only factorisation with respect to $G_{I}$ is obtained by taking $a = w$ and $b = e$. However, \begin{center} $wG_{I}  = \lbrace t_{1}t_{2}t_{3}, \hspace{0.2cm} t_{1}t_{2}t_{3}t_{1},\hspace{0.2cm} t_{1}t_{2}t_{3}t_{2},\hspace{0.2cm} t_{2}t_{3}t_{1}, \hspace{0.2cm}t_{2}t_{3}t_{2},\hspace{0.2cm} t_{2}t_{3} \rbrace.$\end{center} Thus no factorisation of $w$ can be obtained from the minimal length element, $t_{2}t_{3}$,  of $wG_{I}$. 

The property that this factorisation is unique for elements of $W$, and determined by the unique minimal length element of $wW_{I}$, is a result of the Deletion Condition \cite[Theorem 1.7]{Humphreys}. The Deletion Condition is a characterising result for Coxeter groups. That is, if $W$ is a group and $S$ a set of involutions generating $W$, then $(W, S)$ has the Deletion Condition if and only if $(W, S)$ is a Coxeter system \cite[Theorem 1.5.1]{Bjorner}. Considering the proof of \cite[Proposition 1.10(c)]{Humphreys}, we see that the factorisations of $w$ with respect to $G_{I}$, as taken in the above counterexample, are no longer unique or determined by the minimal length elements of $wG_{I}$ because, without the Deletion Condition, we can no longer omit certain factors from a non-reduced expression of $w$ and leave $w$ unchanged. 

\newpage

\begin{figure}[H]
\center{
\begin{tikzpicture}
   \newdimen\R
   \R=2cm
   
  \draw[xshift=-1.49\R, yshift=-.88\R] (0:\R) \foreach \x in {60, 120, 180, 240, 300, 360} {  -- (\x:\R) };
\foreach \x in {120, 240} 
 \node[xshift=-1.49\R, yshift=-.88\R, inner sep=1pt,circle,draw,fill,label={}] at (\x:\R) {};
 \node[xshift=-1.49\R, yshift=-.88\R, inner sep=1.5pt,circle,draw,fill=orange,label={}] at (180:\R) {};
\node[xshift=-2.2\R, yshift=-.88\R, align=center] at (180:\R) {$t_{3}t_{2}t_{1}t_{2} = t_{3}t_{1}t_{2}t_{1}$ \\ \color{gray} $(1, 3)(2, 4)$};
\node[xshift=-2\R, yshift=-.88\R, align=center] at (120:\R) {$t_{3}t_{1}t_{2}$ \\ \color{gray} $(2, 3, 1, 4)$};
\node[xshift=-2\R, yshift=-.88\R, align=center] at (240:\R) {$t_{3}t_{2}t_{1} $ \\ \color{gray} $(1, 3, 4, 2)$};

  \draw[xshift=-1.49\R, yshift=-2.62\R] (0:\R) {(120:\R) -- (180:\R) };
 \node[xshift=-1.49\R, yshift=-2.62\R, inner sep=1.5pt,circle,draw,fill=pink,label={}] at (180:\R) {};
 \node[xshift=-1.49\R, yshift=-2.95\R, align=center] at (180:\R) {$t_{3}t_{2}t_{1}t_{3} $ \\ \color{gray}$(4, 1, 3)$};         
 
 \draw[xshift=-1.49\R, yshift=.86\R] (0:\R) {(360:\R) -- (60:\R) };
 \node[xshift=-1.49\R, yshift=.86\R, inner sep=1.5pt,circle,draw,fill=blue,label={}] at (60:\R) {};
\node[xshift=-1.49\R, yshift=1.18\R, align=center] at (60:\R) {$t_{3}t_{1}t_{3}t_{2} = t_{1}t_{3}t_{1}t_{2}$ \\ \color{gray} $(1, 4, 3)$};

\draw (0:\R) \foreach \x in {60,120, 180, 240, 300, 360} {  -- (\x:\R) };
\foreach \x in {60,120, 180, 240, 300, 360} 
 \node[inner sep=1pt,circle,draw,fill,label={}] at (\x:\R) {};

\node[xshift=0.4\R, align=center] at (60:\R) {$t_{1}t_{3}$ \\ \color{gray}$(2, 4, 1)$};
\node[xshift=-0.35\R, yshift=0.1\R, align=center] at (120:\R) {$t_{3}t_{1}t_{3} = t_{1}t_{3}t_{1}$ \\ \color{gray}$(1, 4)$};
\node[xshift=0.4\R, align=center] at (180:\R) {$t_{3}t_{1}$ \\ \color{gray}$(1, 4, 2)$};
\node[xshift=-0.35\R, align=center] at (240:\R) {$t_{3}$ \\ \color{gray}$(2, 4)$};
\node[xshift=0.2\R, align=center] at (300:\R) {$e$ \\ \color{gray}$()$};
\node[xshift=-0.35\R, align=center] at (360:\R) {$t_{1}$ \\ \color{gray}$(1, 2)$};

\draw[xshift=-1.49\R, yshift=.86\R] (0:\R) {(180:\R) -- (240:\R) };
 \node[xshift=-1.49\R, yshift=.86\R, inner sep=1.5pt,circle,draw,fill=red,label={}] at (180:\R) {};
\node[xshift=-1.49\R, yshift=1.18\R, align=center] at (180:\R) {$t_{3}t_{1}t_{2}t_{3}$ \\ \color{gray} $(1, 4)(2, 3)$};

     
\draw[xshift=1.49\R, yshift=.86\R] (0:\R) \foreach \x in {60,120,...,360} {  -- (\x:\R) };
\foreach \x in {120, 360} 
 \node[xshift=1.49\R, yshift=.86\R, inner sep=1pt,circle,draw,fill,label={}] at (\x:\R) {};
 \node[xshift=1.49\R, yshift=.88\R, inner sep=1.5pt,circle,draw,fill=green,label={}] at (60:\R) {};

\node[xshift=2\R, yshift=1\R, align=center] at (60:\R) {$t_{1}t_{3}t_{2}t_{3} = t_{1}t_{2}t_{3}t_{2}$ \\ \color{gray}$(1, 2)(3, 4)$};
\node[xshift=1\R, yshift=1.01\R, align=center] at (120:\R) {$t_{1}t_{3}t_{2} $ \\ \color{gray}$(2, 3, 4, 1)$};
\node[xshift=1\R, yshift=.86\R, align=center] at (360:\R) {$t_{1}t_{2}t_{3}$ \\ \color{gray}$(2, 4, 3, 1)$};

 \draw[yshift=1.74\R] (0:\R) {(360:\R) -- (60:\R) };
\node[xshift=-0.025\R, yshift=1.78\R, inner sep=1.5pt,circle,draw,fill=pink] at (60:\R) {};
\node[yshift=2\R, align=center] at (60:\R) {$t_{1}t_{3}t_{2}t_{1}$ \\ \color{gray} $(4, 1, 3)$};

\draw[xshift=2.98\R] (0:\R) {(120:\R) -- (60:\R) };
 \node[xshift=2.98\R, inner sep=1.5pt,circle,draw,fill=pink,label={}] at (60:\R) {};
\node[xshift=3.35\R, yshift=0.5, align=center] at (60:\R) {$t_{1}t_{2}t_{3}t_{1}$ \\ \color{gray} $(1, 4, 3)$};

\draw[yshift=-1.74\R] (0:\R) \foreach \x in {60,120, 180, 240, 300, 360} {  -- (\x:\R) };
\foreach \x in {60,120, 180, 240, 300, 360} 
\node[yshift=-1.74\R, inner sep=1pt,circle,draw,fill,label={}] at (\x:\R) {};

\node[xshift=0.4\R, yshift=-1.74\R, align=center] at (180:\R) {$t_{3}t_{2}$ \\ \color{gray}$(3, 4, 2)$};
\node[xshift=-0.31\R, yshift=-1.74\R, align=center] at (240:\R) {$t_{3}t_{2}t_{3} = t_{2}t_{3}t_{2}$ \\ \color{gray}$(3, 4)$};
\node[xshift=0.4\R, yshift=-1.74\R, align=center] at (300:\R) {$t_{2}t_{3}$ \\ \color{gray}$(2, 4, 3)$};

\draw[xshift=-1.49\R, yshift=-2.62\R] (0:\R) \foreach \x in {60,120,...,360} { (300:\R) -- (360:\R)};
\node[xshift=-1.49\R, yshift=-2.62\R, inner sep=1.5pt,circle,draw,fill=green] at (300:\R) {};
\node[xshift=-1.49\R, yshift=-2.88\R, align=center] at (300:\R) {$t_{3}t_{2}t_{3}t_{1} = t_{2}t_{3}t_{2}t_{1}$ \\ \color{gray} $(1, 2)(3, 4)$};

\draw[xshift=1.49\R, yshift=-.88\R] (0:\R) \foreach \x in {300, 360, 60} {  -- (\x:\R) };
\foreach \x in {60,120, 300, 360} 
 \node[xshift=1.49\R, yshift=-.88\R, inner sep=1pt,circle,draw,fill,label={}] at (\x:\R) {};

\node[xshift=1.9\R, yshift=-.88\R, align=center] at (60:\R) {$t_{1}t_{2}$ \\ \color{gray}$(2, 3, 1)$};
\node[xshift=1.2\R, yshift=-.9\R, align=center] at (240:\R) {$t_{2}$ \\ \color{gray}$(2, 3)$};
\node[xshift=1.9\R, yshift=-.9\R, align=center] at (300:\R) {$t_{2}t_{1}$ \\ \color{gray}$(1, 3, 2)$};
\node[xshift=1.14\R, yshift=-.84\R, align=center] at (360:\R) {$t_{1}t_{2}t_{1} = t_{2}t_{1}t_{2}$ \\ \color{gray}$(1, 3)$};

\draw[xshift=2.98\R, yshift=-1.74\R] (0:\R) {(120:\R) -- (60:\R) };
 \node[xshift=2.98\R, yshift=-1.74\R, inner sep=1.5pt,circle,draw,fill=orange,label={}] at (60:\R) {};
\node[xshift=3.5\R, yshift=-1.7\R, align=center] at (60:\R) {$t_{1}t_{2}t_{1}t_{3} = t_{2}t_{1}t_{2}t_{3}$ \\ \color{gray} $(1, 3)(2, 4)$};


\draw[xshift=1.49\R, yshift=-2.62\R] (0:\R)  \foreach \x in {60,120, 180, 240, 300, 360} {  -- (\x:\R) };
\foreach \x in {240, 360} 
 \node[xshift=1.49\R, yshift=-2.62\R, inner sep=1pt,circle,draw,fill,label={}] at (\x:\R) {};
 \node[xshift=1.49\R, yshift=-2.62\R, inner sep=1.5pt,circle,draw,fill=blue,label={}] at (300:\R) {};

\node[xshift=1\R, yshift=-2.62\R, align=center] at (240:\R) {$t_{2}t_{3}t_{1}$ \\ \color{gray}$(1, 4, 3, 2)$};
\node[xshift=2.3\R, yshift=-2.62\R, align=center] at (300:\R) {$t_{2}t_{3}t_{1}t_{3} = t_{2}t_{1}t_{3}t_{1} $ \\ \color{gray}$(1, 4)(2, 3)$};
\node[xshift=1\R, yshift=-2.62\R, align=center] at (360:\R) {$t_{2}t_{1}t_{3}$ \\ \color{gray}$(2, 4, 1, 3)$};

\draw[xshift=2.98\R, yshift=-1.74\R] (0:\R) {(240:\R) -- (300:\R) };
\node[xshift=3.3\R, yshift=-1.74\R, align=center] at (300:\R) {$t_{2}t_{1}t_{3}t_{2} $ \\ \color{gray}$(4, 1, 3)$};
 \node[xshift=2.98\R, yshift=-1.74\R, inner sep=1.5pt,circle,draw,fill=pink,label={}] at (300:\R) {};

\draw[xshift=1.49\R, yshift=-4.36\R] (0:\R) {(120:\R) -- (180:\R) };
 \node[xshift=1.49\R, yshift=-4.36\R, inner sep=1.5pt,circle,draw,fill=red,label={}] at (180:\R) {};
 \node[xshift=1.49\R, yshift=-4.8\R, align=center] at (180:\R) {$t_{2}t_{3}t_{1}t_{2} $ \\ \color{gray}$(1, 4, 3)$};

\end{tikzpicture}
\caption{\label{fig: Cayley graph} The Cayley graph of $(G, \lbrace t_{1}, t_{2}, t_{3} \rbrace)$ and the corresponding permutations of each element under $\pi$. The coloured nodes denote an identification of vertices.}}
\end{figure}
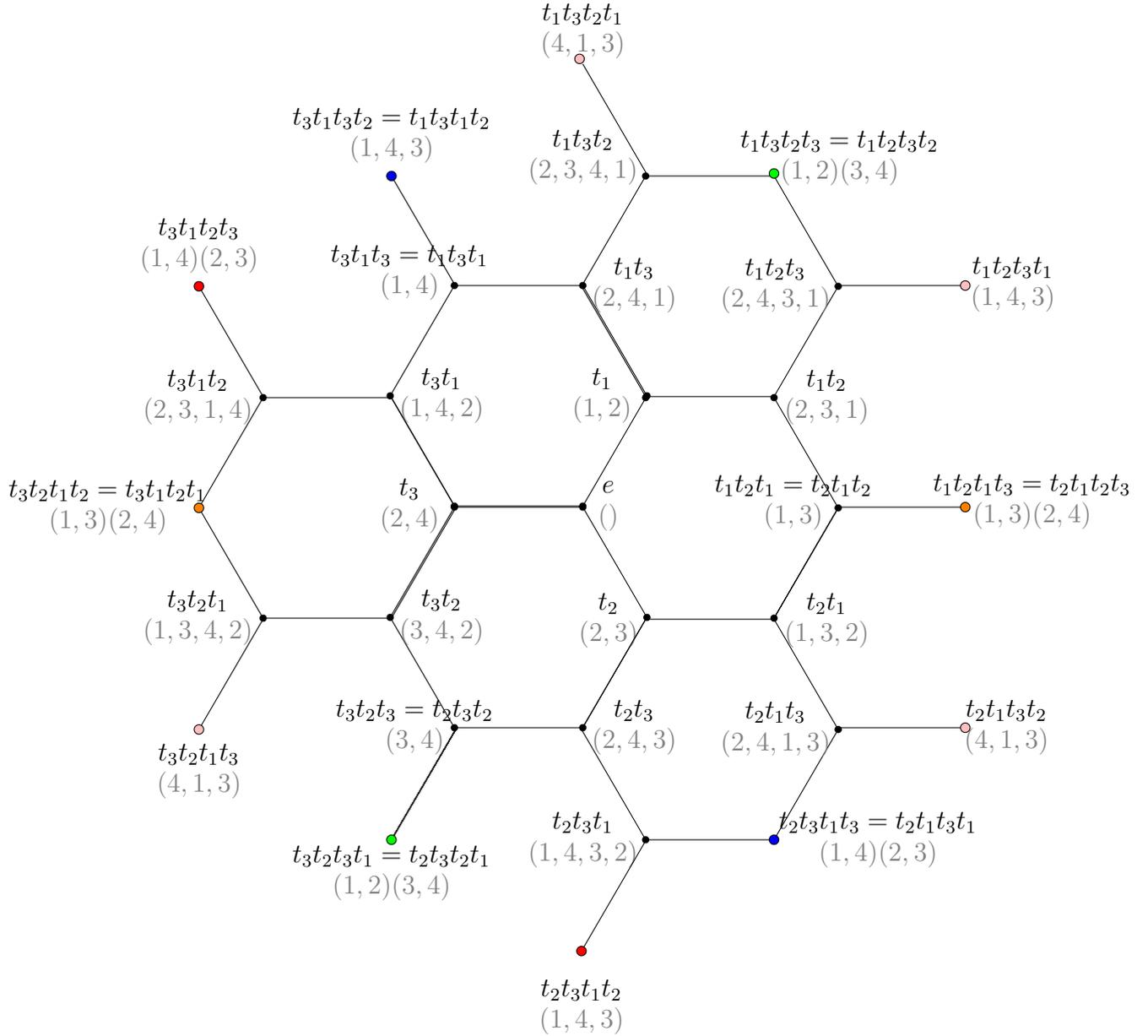







%
%


\newpage

 \end{document}